\documentclass[11pt]{amsart}

\usepackage{amsmath,amssymb,amsthm}
\usepackage{graphicx}
\usepackage{cite}
\usepackage[usenames, dvipsnames]{color}
\usepackage{hyperref}

\addtolength{\topmargin}{-2\baselineskip}
\addtolength{\textheight}{4\baselineskip}
\addtolength{\textwidth}{2cm}
\addtolength{\oddsidemargin}{-10mm}
\addtolength{\evensidemargin}{-10mm}

\normalsize

\def\dfrac#1#2{\lower0.15ex\hbox{\large$\frac{#1}{#2}$}}

\allowdisplaybreaks

%




\def\eps{\varepsilon}

\def\C{\mathbb{C}}

\def\F{\mathbb{F}}

\def\E{\mathbb{E}}

\DeclareMathOperator\Af{A4}
\DeclareMathOperator\Ai{A5}
\newcommand{\vect}[1]{\boldsymbol{#1}}

\newtheorem{firstthm}{Proposition}[section]
\newtheorem{thm}[firstthm]{Theorem}
\newtheorem{prop}[firstthm]{Proposition}

\newtheorem{lemma}[firstthm]{Lemma}

\newtheorem{ques}[firstthm]{Question} 
\newtheorem*{alon-mainthm}{Theorem \ref{t:main-alon}}

\theoremstyle{definition}

\newtheorem{remark}[firstthm]{Remark} 
 
\newtheorem{defn}[firstthm]{Definition}

\title{On a question of Alon}

\author{Daniel Altman}
\address{University of Oxford, Mathematical Institute, Radcliffe Observatory Quarter, Woodstock Rd, Oxford, OX2 6GG, United Kingdom}
\email{daniel.h.altman@gmail.com}

\begin{document}

\maketitle

\begin{abstract}
A system of linear equations in $\F_p^n$ is \textit{common} if every two-colouring of $\F_p^n$ yields at least as many monochromatic solutions as a random two-colouring, asymptotically as $n \to \infty$. By analogy to the graph-theoretic setting, Alon has asked whether any (non-Sidorenko) system of linear equations can be made uncommon by adding sufficiently many free variables. Fox, Pham and Zhao answered this question in the affirmative among systems which consist of a single equation. We answer Alon's question in the negative.

We also observe that the property of remaining common despite that addition of arbitrarily many free variables is closely related to a notion of commonness in which one replaces the arithmetic mean of the number of monochromatic solutions with the geometric mean, and furthermore resolve questions of Kam\v cev--Liebenau--Morrison.
\end{abstract}

\section{Introduction}

In graph theory, a graph $H$ is called \textit{common} if every two-colouring of $K_n$ contains at least at many monochromatic copies of $H$ as a random two-colouring does (in the limit $n\to \infty$). Goodman \cite{G59} proved that $K_3$ is common. Erd\H os conjectured that $K_4$ is common  \cite{E62} and subsequently Burr and Rosta conjectured \cite{BR80} that every graph is common. These conjectures were disproved by Sidorenko \cite{Sid89} and Thomason \cite{T89} who demonstrated respectively that a triangle with a pendant edge and $K_4$ are in fact not common. The classification of common graphs remains very much open to this day.

One observes that if $H$ satisfies the stronger property that for every large graph $G$, among all subsets of $G$ of fixed density, a random subset of $G$ contains the fewest copies of $H$ (in the limit $|V(G)| \to \infty$), then $H$ is certainly common.  Graphs satisfying this stronger property are called \textit{Sidorenko}. Conjecturally \cite{S93}, all bipartite graphs are Sidorenko; this conjecture remains wide open (despite the resolution of various special cases). Certainly, since Sidorenko's conjecture is not known to be false, all known instances of uncommon graphs are not bipartite. In fact, \textit{every} non-bipartite graph can be made uncommon by the addition of sufficiently many pendant edges (an edge $e$ is pendant to a graph $H$ if $|e \cap V(H)| = 1$).

\begin{thm}[{\cite{JST96}}]
If $H$ is a non-bipartite graph with $m\geq 3$ vertices, then there is a positive integer $l_0(m)$ such that any graph obtained by successively adding at least $l_0$ pendant edges to $H$ is uncommon.
\end{thm}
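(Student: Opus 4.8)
The plan is to reason via graphons, writing $t(\cdot,\cdot)$ for homomorphism density: a graph $F$ is common if and only if $t(F,W)+t(F,1-W)\ge 2^{1-e(F)}$ for every graphon $W\colon[0,1]^2\to[0,1]$, so it suffices to produce a single $W$ for which the strict reverse inequality holds. I will carry out the argument for connected $H$; the disconnected case should follow with only notational changes, working inside the non-bipartite component. Write $m=|V(H)|$ and $e=e(H)$, and recall that a connected non-bipartite graph has at least as many edges as vertices, so $e\ge m$. The witnessing graphon $W=W_\beta$ I have in mind is, for a small parameter $\beta\in(0,1)$ to be tuned later, the limit of the following $2$-colouring of $K_n$: fix a balanced bipartition, colour every edge across it red, and colour each edge inside a part red independently with probability $\beta$. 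As a graphon, $W_\beta$ lives on $[0,1]=P\sqcup Q$ with $|P|=|Q|=\tfrac12$, equals $1$ on $P\times Q$ and $Q\times P$, and equals $\beta$ on $P\times P$ and $Q\times Q$; thus the blue graphon $1-W_\beta$ is a pair of disjoint cliques of density $1-\beta$, one on $P$ and one on $Q$.

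The crucial step, which makes the computation go through, is a decoupling identity for the red count. Write $H'=H\cup F$, where $F$ consists of the $l$ added pendant edges together with their endpoints; since each pendant edge has a brand-new endpoint, no edge of $F$ joins two vertices of $H$, and $F$ is a forest (each new vertex is introduced as a leaf, so any cycle would yield a contradiction). Colouring $V(H')$ by $P$/$Q$ uniformly at random and letting $m(\phi)$ denote the number of monochromatic edges, one has $t(H',W_\beta)=\E_\phi\bigl[\beta^{m(\phi)}\bigr]$. Conditioning on the colours of $V(H)$ and then revealing the colours of the remaining (new) vertices in breadth-first order from $V(H)$, each edge of $F$ turns out to be monochromatic with probability $\tfrac12$, independently of everything else; hence $t(H',W_\beta)=\bigl(\tfrac{1+\beta}{2}\bigr)^{l}Z_H(\beta)$, where $Z_H(\beta):=\E_{\psi\colon V(H)\to\{0,1\}}\bigl[\beta^{m_H(\psi)}\bigr]$ and $0<Z_H(\beta)\le\beta$ because $H$ is non-bipartite, so $m_H(\psi)\ge1$ for every $\psi$. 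On the blue side, connectedness of $H'$ forces any homomorphic copy entirely into one part, giving $t(H',1-W_\beta)=2\cdot 2^{-(m+l)}(1-\beta)^{e+l}$. Both identities are exact.

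The last step is to tune $\beta$. Dividing the total monochromatic density by the random value $2^{1-e(H')}=2^{1-(e+l)}$ produces $2^{e-1}Z_H(\beta)(1+\beta)^{l}+2^{e-m}(1-\beta)^{e+l}$. Take $\beta=\gamma/l$ with $\gamma:=(e-m+1)\ln2>0$. By $1-x<e^{-x}$, the second (blue) term is strictly below $2^{e-m}e^{-\gamma}=2^{e-m}2^{-(e-m+1)}=\tfrac12$, while $Z_H(\beta)\le\beta$ and $(1+\beta)^{l}\le e^{\gamma}=2^{e-m+1}$ bound the first (red) term by $2^{2e-m}\gamma/l$, which is $O_m(1/l)$ since $e\le\binom{m}{2}$. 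So the ratio is at most $\tfrac12+O_m(1/l)<1$ as soon as $l\ge l_0(m)$ for a suitable $l_0$ depending only on $m$, and $W_\beta$ certifies that $H'$ is uncommon. Morally: the red graph is almost bipartite, so it carries hardly any copies of the non-bipartite $H'$, and the many pendant edges drive the blue count down to about one half of random — in the two-clique blue graphon a connected $H'$ on $m+l$ vertices costs $2^{-(m+l)}$ in vertex measure, and $\gamma$ is chosen precisely so that this beats $2^{-(e+l)}$ by a constant factor.

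The part I expect to be the genuine obstacle is finding this construction and the decoupling identity — in particular realising that one should make the red colour almost-bipartite-plus-sparse-noise, so that the red count is tractable through the forest factorisation and the blue count through connectedness simultaneously, and that the within-part density must be taken of order $1/l$: with a constant $\beta$ the factor $\bigl(\tfrac{1+\beta}{2}\bigr)^{l}$ in the red count blows up relative to random and the argument collapses. The remaining points — checking $F$ is a forest, handling disconnected $H$, and confirming that $l_0$ may be taken as an explicit function of $m$ (it can, since $e(H)\le\binom{m}{2}$ bounds every quantity that occurs) — are routine.
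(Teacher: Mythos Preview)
The paper does not prove this theorem: it is quoted from \cite{JST96} purely as motivation for Alon's question, and no argument is given or sketched in the paper. So there is no ``paper's own proof'' to compare against; I can only comment on your argument on its own terms.

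For connected $H$ your argument is correct. The decoupling identity $t(H',W_\beta)=Z_H(\beta)\bigl(\tfrac{1+\beta}{2}\bigr)^{l}$ is valid because the pendant structure $F$ is a forest whose non-$H$ vertices can be revealed leaf by leaf, each contributing an independent $\mathrm{Ber}(1/2)$ to the monochromatic edge count; the blue count $t(H',1-W_\beta)=2^{1-(m+l)}(1-\beta)^{e+l}$ is exact by connectedness; and the choice $\beta=\gamma/l$ with $\gamma=(e-m+1)\ln 2$ makes the normalised sum strictly below $\tfrac12+O_m(1/l)$. Since $e\le\binom{m}{2}$, the resulting threshold depends only on $m$.

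The one place that needs more than ``notational changes'' is the disconnected case. Your bound $Z_H(\beta)\le\beta$ survives (non-bipartiteness still forces at least one monochromatic edge in every $2$-colouring), but the inequality $e\ge m$, which you use to ensure $\gamma>0$, can fail --- take $H=K_3$ together with an isolated vertex, where $m=4$, $e=3$, $\gamma=0$. Likewise the blue computation changes: each component independently lands in a part, so $t(H',1-W_\beta)$ picks up a factor $2^{k}$ for $k$ components rather than $2$, and the pendant edges need not all lie in the non-bipartite component. None of this is fatal --- one can, for instance, replace $e-m+1$ by the excess of edges over vertices in the non-bipartite component and track the extra factors of $2$ --- but it is a genuine modification to the estimates, not just notation, and you should carry it out explicitly.
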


Alon has asked whether the analogous result is true in the arithmetic setting. A system of linear equations in $\F_p^n$ is \textit{common} if every two-colouring of $\F_p^n$ yields at least as many monochromatic solutions as a random two-colouring, asymptotically as $n \to \infty$. A system of linear equations in $\F_p^n$ is \textit{Sidorenko} if among all subsets $S$ of $\F_p^n$ of fixed density, the number of solutions in $S$ is minimised when $S$ is a random set, asymptotically as $n \to \infty$. It is clear that if a linear system is Sidorenko, then it is common.  Saad and Wolf initiated the study of these properties in the arithmetic setting \cite{SW17}. For both the Sidorenko and common properties, necessary \cite{FPZ21} and sufficient \cite{SW17} conditions for systems containing a single equation are known (in fact, a single equation in an even number of variables is Sidorenko if and only if it is common, and an equation in an odd number of variables is necessarily common but not Sidorenko). The classification problems for systems containing more than one equation remain wide open, despite recent interest and the resolution of special cases; see \cite{FPZ21}, \cite{KLMCom}, \cite{KLMSid}, \cite{Vab}, \cite{V4ap}.

It transpires that adding a free variable to a system of linear equations (equivalently, a free dimension to the solution space) corresponds to the addition of a pendant edge in the graph-theoretic setting. In those cases when one can transfer between the graph-theoretic and arithmetic settings via Cayley graph constructions, these operations are equivalent. See \cite[Example 4.2]{SW17} for an instance of this correspondence and elsewhere in that paper for a brief discussion of the correspondence in general. For $l \geq 0$, let $\Psi^{(l)}$ be the linear system attained by adding $l$ free variables to $\Psi$.

\begin{ques}[Alon, {\cite[Question 4.1]{SW17}}]\label{q:alon}
Is it true that for all non-Sidorenko $\Psi$, there exists $l_0$ such that for all $l\geq l_0$ we have $\Psi^{(l)}$ is uncommon?
\end{ques}

For example, it is known that three term arithmetic progressions, whose solution space is parameterised $\Psi = (x,x+d,x+2d)$, are common. Is it true that the system given by $\Psi^{(l)} = (x,x+d,x+2d,y_1,\ldots, y_l)$ is uncommon for all $l$ sufficiently large? The answer is yes by the following theorem of Fox--Pham--Zhao. 

\begin{thm}[{\cite[Theorem 1.5]{FPZ21}}]
The answer to Question 1.2 is yes among all $\Psi$ whose image has codimension one. 
\end{thm}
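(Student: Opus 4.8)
\section*{Proof proposal}

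The plan is to recast commonness of $\Psi^{(l)}$ as an inequality for fractional colourings, use the codimension-one hypothesis to collapse the relevant multilinear form to a shape with no intermediate-degree terms, and then exhibit an explicit almost-balanced structured colouring that beats random. After absorbing free variables already present in $\Psi$, we may take $\Psi^{(l)}$ to be a single nondegenerate equation $c_1x_1+\cdots+c_kx_k=0$ (all $c_i\neq 0$) together with $l$ free variables $y_1,\dots,y_l$. For $\phi\colon\F_p^n\to[0,1]$ set $t_\Psi(\phi)=\mathbb{E}_{c_1x_1+\cdots+c_kx_k=0}\prod_{i=1}^k\phi(x_i)$, so the normalised density of monochromatic solutions of $\Psi^{(l)}$ in the fractional colouring $\phi$ is
\[
M_l(\phi)=t_\Psi(\phi)(\mathbb{E}\phi)^l+t_\Psi(1-\phi)(1-\mathbb{E}\phi)^l ,
\]
whereas a balanced random colouring has density $2^{1-k-l}$. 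First I would record the standard rounding reduction: since the solutions of $\Psi^{(l)}$ with a repeated coordinate form a vanishing proportion as $n\to\infty$, colouring each point $x$ with colour $1$ independently with probability $\phi(x)$ produces a genuine two-colouring of expected monochromatic density $M_l(\phi)+o(1)$, so it suffices to find (uniformly in $n$, for $n$ large) some $\phi$ with $M_l(\phi)$ below $2^{1-k-l}$ by a fixed positive constant.

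The hypothesis on $\Psi$ enters through the following collapse. For a single equation, any $k-1$ of the variables $x_1,\dots,x_k$ are jointly uniform and independent on the solution set, so $\mathbb{E}_\Psi\prod_{i\in S}h(x_i)=0$ whenever $\mathbb{E}h=0$ and $\emptyset\neq S\subsetneq[k]$; expanding $t_\Psi$ by multilinearity therefore gives $t_\Psi(\alpha\mathbf{1}+h)=\alpha^k+t_\Psi(h)$ for all mean-zero $h$. In particular $\Psi$ is not Sidorenko precisely when $t_\Psi(g_0)<0$ for some $g_0\colon\F_p^n\to[-1,1]$ with $\mathbb{E}g_0=0$; rescaling I may assume $\lVert g_0\rVert_\infty\le\tfrac12$, and pulling $g_0$ back to larger ambient groups I may assume the deficit $-t_\Psi(g_0)=2c$ is a constant independent of $n$. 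Substituting $\phi=\tfrac{1+\mu}{2}\mathbf{1}+\tfrac12 h$ with $\mathbb{E}h=0$ and $\mu\in[0,\tfrac12]$ (so $\phi$ is $[0,1]$-valued) into $M_l$ and applying the collapse identity, I would reach the closed form
\[
2^{k+l}M_l(\phi)=(1+\mu)^{k+l}+(1-\mu)^{k+l}+t_\Psi(h)\bigl[(1+\mu)^l+(-1)^k(1-\mu)^l\bigr].
\]

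Now I would take $h=g_0$, so $t_\Psi(h)=-2c$. If $k$ is even, already $\mu=0$ gives $2^{k+l}M_l(\phi)=2-4c<2$. If $k$ is odd, $\mu=0$ gives exactly $2$ (consistent with $\Psi$ itself being common), so instead I would take $\mu>0$ a sufficiently small constant: since $(1+\mu)^{k+l}+(1-\mu)^{k+l}=2+O(\mu^2)$ and $(1+\mu)^l-(1-\mu)^l=2l\mu+O(\mu^3)$, the right-hand side equals $2-4cl\mu+O(\mu^2)$, which is strictly below $2$ once $\mu$ is small enough. Either way $M_l(\phi)$ drops below $2^{1-k-l}$ by a constant independent of $n$, and the rounding step converts this into a genuine two-colouring beating random; in fact one obtains that $\Psi^{(l)}$ is uncommon for every $l\ge 1$ (and already for $l=0$ when $k$ is even), so Alon's question has answer ``yes'' with $l_0=1$ in codimension one.

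The computations are routine; the delicate points are the fractional-to-genuine rounding --- where I must ensure the Sidorenko deficit of $\Psi$ is uniform in $n$, so the rounded colouring really beats random for all large $n$ --- and, more essentially, the collapse identity $t_\Psi(\alpha\mathbf{1}+h)=\alpha^k+t_\Psi(h)$. The latter is where codimension one is indispensable: in higher codimension the expansion of $t_\Psi$ carries genuine intermediate-degree correlation terms, the closed form above fails, and the argument breaks down --- which is exactly why the theorem is limited to a single equation.
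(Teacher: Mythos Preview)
The paper does not contain its own proof of this statement: it is quoted verbatim as \cite[Theorem~1.5]{FPZ21} and attributed to Fox--Pham--Zhao, with no argument reproduced. There is therefore nothing in the present paper to compare your proposal against.

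That said, your argument is correct and self-contained. The key steps are all sound: the collapse identity $t_\Psi(\alpha\mathbf{1}+h)=\alpha^k+t_\Psi(h)$ for a single nondegenerate equation (this is precisely the point where codimension one is essential, and you identify it as such); the characterisation of ``not Sidorenko'' as the existence of a mean-zero $g_0$ with $t_\Psi(g_0)<0$, together with rescaling and pullback to make the deficit uniform in $n$; the closed form for $2^{k+l}M_l(\phi)$; and the parity split, with $\mu=0$ in the even case and a small perturbation $\mu>0$ in the odd case. One small comment: in the odd-$k$ analysis your $O(\mu^2)$ hides a constant of size roughly $(k+l)^2$, so the $\mu$ that works must be chosen depending on $l$. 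This is harmless since $l$ is fixed in each instance, but it means your conclusion ``$l_0=1$'' comes with $\mu=\mu(l)$ rather than a single uniform $\mu$. Your attention to the fractional-to-genuine rounding and to the uniformity of the deficit in $n$ addresses exactly the subtleties that could otherwise cause trouble.
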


In light of Question \ref{q:alon} and for convenience in this document, we will refer to systems $\Psi$ with the property that $\Psi^{(l)}$ is \textit{common} for all sufficiently large $l$ as \textit{Alon}. We have mentioned already that Sidorenko implies common; in fact it is not difficult to see that Sidorenko implies Alon. In this language, the above question essentially (i.e. under the assumption that linear systems cannot oscillate infinitely often between common and uncommon under the addition of free variables) asks whether Sidorenko is equivalent to Alon. This is true in the graph-theoretic setting under Sidorenko's conjecture. It turns out that it is not true in the arithmetic setting.

\begin{thm}\label{t:main-alon}
There is a system of linear equations which is Alon but not Sidorenko, so the answer to Question \ref{q:alon} is no. 
\end{thm}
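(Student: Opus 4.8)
The plan is to isolate the feature of a system that makes it Alon and then exhibit an explicit witness. For a linear system $\Psi$ in $m$ variables, write $T_\Psi(f)$ for the density of solutions with each coordinate weighted by $f\colon\F_p^n\to[0,1]$, normalised so that $T_\Psi(c)=c^m$ for constants $c$; in this language $\Psi$ is common iff $T_\Psi(f)+T_\Psi(1-f)\ge2^{1-m}$ for all such $f$ (as $n\to\infty$), Sidorenko iff $T_\Psi(f)\ge(\E f)^m$, and adjoining a free variable multiplies the count by $\E f$, so $T_{\Psi^{(l)}}(f)=(\E f)^l\,T_\Psi(f)$. Call $\Psi$ \emph{geometrically common} if $T_\Psi(f)\,T_\Psi(1-f)\ge2^{-2m}$ whenever $\E f=\tfrac12$. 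Sidorenko implies geometrically common (both factors are then $\ge2^{-m}$), but the converse can fail, because at density $\tfrac12$ a non-Sidorenko ``defect'' enters the product $T_\Psi(f)T_\Psi(1-f)$ only to squared order. I would prove the theorem in two parts: (i) geometrically common implies Alon; (ii) some geometrically common system is not Sidorenko.

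\textbf{Part (i).} After dividing by $2^{-l}$, commonness of $\Psi^{(l)}$ says $T_\Psi(f)(2\alpha)^l+T_\Psi(1-f)(2-2\alpha)^l\ge2^{1-m}$ with $\alpha=\E f$. I would run a compactness argument: if this fails along $f_j$ on $\F_p^{n_j}$ with $l_j\to\infty$, pass to a subsequence with $\alpha_j\to\alpha_\ast$. If $\alpha_\ast\ne\tfrac12$, then for large $j$ one of $2\alpha_j,\,2-2\alpha_j$ exceeds $1$ by a fixed amount, so its $l_j$-th power tends to $\infty$ while the matching $T_\Psi$-value stays bounded below (positive density forces a positive normalised solution count, uniformly in $n$, for the translation-invariant systems in play), and the left side blows up. If $\alpha_\ast=\tfrac12$, put $s_j=\alpha_j-\tfrac12\to0$; as $T_\Psi$ is Lipschitz for $\|\cdot\|_\infty$, geometric commonness gives $T_\Psi(f_j)T_\Psi(1-f_j)\ge2^{-2m}-o(1)$, whence each factor is $\ge2^{-2m}-o(1)>0$. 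If $s_jl_j\to\infty$ the dominant term again blows up; if $s_jl_j$ stays bounded, $\to c$, then $(1\pm2s_j)^{l_j}\to e^{\pm2c}$ and, along a further subsequence, the left side tends to $ae^{2c}+be^{-2c}\ge2\sqrt{ab}\ge2^{1-m}$, where $ab=\lim T_\Psi(f_j)T_\Psi(1-f_j)\ge2^{-2m}$. So no counterexample survives; this proves (i) (and that the Alon property cannot oscillate in $l$).

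\textbf{Part (ii): the system.} I would take $\Psi$ to be the disjoint union, on disjoint variable sets, of the three-term progression $x_1-2x_2+x_3=0$ and $k$ copies of the additive quadruple $y_1+y_2=y_3+y_4$, over $\F_3^n$ with $k=1$ (a fixed $k=k(p)$ works over $\F_p^n$ for any $p\ge3$); so $m=7$. As $T$ is multiplicative over disjoint unions, $T_\Psi(f)=T_{3\mathrm{AP}}(f)\,\|f\|_{U^2}^{4k}$ where $\|f\|_{U^2}^4=\sum_r|\widehat f(r)|^4$. That $\Psi$ is not Sidorenko I would see from $f=\alpha+2\beta\cos\!\big(2\pi(r\cdot x)/3\big)$ with $\beta<0$ small: here $T_{3\mathrm{AP}}(f)=\alpha^3+2\beta^3$ and $\|f\|_{U^2}^4=\alpha^4+2\beta^4$, so $T_\Psi(f)-(\E f)^m=2\alpha^3\beta^3(\alpha+\beta)+4\beta^7<0$---the negative cubic term dominates the quartic gain.

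\textbf{Part (ii): geometric commonness.} Fix $f$ with $\E f=\tfrac12$. Since $\widehat{1-f}(r)=-\widehat f(r)$ for $r\ne0$, one has $T_{3\mathrm{AP}}(f)=\tfrac18+E$ and $T_{3\mathrm{AP}}(1-f)=\tfrac18-E$ with $E=\sum_{r\ne0}\widehat f(r)^2\widehat f(-2r)\in\R$, while $\|1-f\|_{U^2}=\|f\|_{U^2}$ and $\|f\|_{U^2}^4=\tfrac1{16}+G$ with $G=\sum_{r\ne0}|\widehat f(r)|^4\ge0$. Hence $T_\Psi(f)T_\Psi(1-f)=\big(\tfrac1{64}-E^2\big)\big(\tfrac1{16}+G\big)^{2k}$, so the required inequality $T_\Psi(f)T_\Psi(1-f)\ge2^{-2m}$ is precisely
\[ (1-64E^2)(1+16G)^{2k}\ \ge\ 1. \]
Now Cauchy--Schwarz, after reindexing $r\mapsto-2r$ (a bijection since $p\ne2$), together with $\sum_{r\ne0}|\widehat f(r)|^2=\E[f^2]-\tfrac14\le\tfrac14$, gives $E^2\le G\sum_{r\ne0}|\widehat f(r)|^2\le G/4$, i.e.\ $64E^2\le16G$; and since $|\widehat f(r)|^2\le\tfrac1{12}$ for density-$\tfrac12$ weights on $\F_3^n$ (an elementary optimisation), $16G\le16\cdot\tfrac1{12}\cdot\tfrac14=\tfrac13$. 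As $(1-u)(1+u)^2$ is nondecreasing on $[0,\tfrac13]$ with value $1$ at $u=0$, plugging in $u=16G$ and using $1-64E^2\ge1-16G$ yields the displayed bound, completing (ii); with (i) the theorem follows. The hard part is this last verification---above all the uniform estimate $E^2\le G/4$, which says the progression's non-Sidorenko defect is dominated, \emph{over every density-$\tfrac12$ weight at once}, by the $U^2$ surplus; the supersaturation and Lipschitz inputs to (i) are routine.
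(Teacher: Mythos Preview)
Your framework is essentially the one the paper develops in Section~\ref{s:geomcom}, but Part~(i) contains a genuine gap: the compactness argument only yields \emph{weakly Alon} (Definition~\ref{d:weak-alon}), not Alon. In the subcase $\alpha_\ast=\tfrac12$ with $s_jl_j\to c$, you conclude that the limit of the left-hand side is $ae^{2c}+be^{-2c}\ge 2\sqrt{ab}\ge 2^{1-m}$. But your hypothesis is only that each term is strictly below $2^{1-m}$, so the limit is merely $\le 2^{1-m}$; equality is perfectly consistent and gives no contradiction. (Equality occurs precisely when $ab=2^{-2m}$ and $c=\tfrac14\log(b/a)$, which is realised e.g.\ along $f_j\to\tfrac12$.) The paper's Proposition~\ref{p:main} formalises exactly this: geometric commonness is equivalent to weakly Alon, not to Alon.

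To obtain genuine Alon, the paper proves two further ingredients for its system $\Phi$: a local Sidorenko property (Lemma~\ref{l:local-sid}: $T_\Phi(f)\ge\alpha^9$ whenever $\|\hat g\|_\infty$ is small) and a \emph{quantitative} sharpening of geometric commonness (Lemma~\ref{l:phi-geomcom}: $T_\Phi(f)T_\Phi(1-f)\ge 2^{-18}+c_3\|\hat g\|_\infty^4 - C_4|\alpha-\tfrac12|$), and then splits into three cases according to the sizes of $|\alpha-\tfrac12|$ and $\|\hat g\|_\infty$. Your system does \emph{not} have the first property: with a single real Fourier coefficient $\beta<0$ one gets $E=2\beta^3$, $G=2\beta^4$, and $T_\Psi(f)-\alpha^7 = 2\alpha^4\beta^3+2\alpha^3\beta^4+4\beta^7<0$ for arbitrarily small $\|\hat g\|_\infty=|\beta|$. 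It may still be that your $(3\mathrm{AP})\times(\mathrm{A4})$ system is Alon---your Part~(ii) does establish geometric commonness with slack roughly $G$, which could conceivably substitute for both lemmas---but converting that into an honest proof requires tracking error terms of sizes $O(s)$, $O(s^2 l)$, $O(G)$ and $O(\sqrt{G})$ simultaneously near the constant function, which your sketch does not attempt. The paper's choice of $\Phi=(\mathrm{A4})\times(\mathrm{A5})$ is made precisely so that the odd part satisfies $|T_{\mathrm{A5}}(g)|=\big|\sum_h|\hat g(h)|^4\hat g(h)\big|\le\|\hat g\|_\infty\, T_{\mathrm{A4}}(g)$, a \emph{linear} bound that makes both auxiliary lemmas clean; for your system the odd part $E=\sum_h\hat g(h)^3$ satisfies only $|E|\le\sqrt{G}/2$, which is the wrong shape for the local-Sidorenko step.
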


The example which demonstrates the negative answer to Question \ref{q:alon} is the following rank 2 system of equations in 9 variables:
\begin{align*}
x_1 - x_2 + x_3 -x_4 &= 0\\
x_5 -x_6 + x_7 - x_8 + x_9 &=0.
\end{align*} 

Theorem \ref{t:main-alon} is proven in Section \ref{s:alon-proof}. As an intermediate lemma in the proof, we provide a lower bound on the \textit{geometric} mean of the number of monochromatic solutions to $\Phi$. We show in Section \ref{s:geomcom} that a lower bound of this type is in fact also necessary, and indeed that a slightly weaker version of the Alon property is {equivalent} to a geometric notion of commonness in which one asks that the geometric mean of the number of monochromatic solutions is at least that of a random two-colouring. This is proven in Proposition \ref{p:main}. The intended utility is that geometric commonness is oftentimes more easily studied than the Alon property.

It transpires that $\Phi$ also exhibits a negative answer to  a question of Kam\v cev--Liebenau--Morrison. Recall that a system is said to be \textit{translation-invariant} if it is solved by setting $x_1=x_2=x_3 =\cdots$. If a system is not translation invariant, then the set of elements of $\F_p^n$ whose first entry is equal to one has positive density but no solutions, so any system which is not translation-invariant cannot be Sidorenko. The only known examples of systems which are not translation invariant and are common are systems of rank one (i.e. systems comprising a single equation), whereupon commonness follows trivially from a cancellation which does not occur for systems of two or more equations. This inspired the following question of Kam\v cev--Liebenau--Morrison.

\begin{ques}\label{q:klm5}\cite[Question 5.6]{KLMSid} Does there exist a system of rank at least two which is common, but not translation-invariant?
\end{ques}

\begin{thm}\label{t:klm5}
The answer to Question \ref{q:klm5} is yes.
\end{thm}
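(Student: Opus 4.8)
The plan is to establish Theorem \ref{t:klm5} essentially as a corollary of the machinery built for Theorem \ref{t:main-alon}. First, observe that $\Phi$ is visibly not translation-invariant: setting $x_1 = x_2 = \cdots$ does not solve $x_5 - x_6 + x_7 - x_8 + x_9 = 0$, since the left-hand side becomes a single copy of the common value rather than zero. Moreover $\Phi$ has rank $2$ (the two equations involve disjoint sets of variables, so the coefficient matrix has two independent rows). Hence it suffices to show that $\Phi$ itself — i.e. $\Phi = \Phi^{(0)}$ — is common. Since the Alon property asserts that $\Phi^{(l)}$ is common for all sufficiently large $l$, this does not literally come for free, so the real content is to verify commonness at $l = 0$ (or, more precisely, to show commonness holds for \emph{all} $l \ge 0$, which is what the analysis in Section \ref{s:alon-proof} should give).

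The key step is the observation that $\Phi$ is a \emph{disjoint union} of two single equations: $x_1 - x_2 + x_3 - x_4 = 0$, a four-variable (hence translation-invariant, Sidorenko, and common) equation, and $x_5 - x_6 + x_7 - x_8 + x_9 = 0$, a five-variable equation which is common but not Sidorenko by the Saad--Wolf / Fox--Pham--Zhao classification of single equations. I would make precise the multiplicativity of the monochromatic-solution count over disjoint variable sets: for a two-colouring $f : \F_p^n \to \{0,1\}$, writing $M_\Psi(f)$ for the (normalised) count of monochromatic solutions, one has a clean relationship between $M_\Phi(f)$ and the counts $M_{\Psi_1}(f), M_{\Psi_2}(f)$ for the two constituent equations $\Psi_1, \Psi_2$ — indeed the solution set of $\Phi$ is the product of those of $\Psi_1$ and $\Psi_2$, and monochromaticity of a product solution is governed by the colours at each coordinate block. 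The slightly delicate point is that ``monochromatic'' for $\Phi$ means all nine entries share a colour, which is \emph{not} simply the product of the two individual monochromaticity events; so one must expand in the Fourier/density basis. Writing $f = \alpha + g$ with $\E g = 0$, the count $M_\Phi(f)$ expands into a sum over subsets of the nine variables, and the cross-terms linking the two blocks must be shown to contribute nonnegatively (or to be dominated appropriately).

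Concretely, I would prove commonness of $\Phi$ by the standard reduction: $\Phi$ is common iff for every $g : \F_p^n \to [-\alpha, 1-\alpha]$ (where $\alpha$ encodes the density of one colour) the relevant expression $T(g) := M_\Phi(\alpha + g) + M_\Phi(1 - \alpha - g) - 2 \cdot 2^{-8}$ (the random count being $2^{1-9} = 2^{-8}$ in the normalisation where each of the $9$ free choices contributes a factor, adjusted for the $2$ defining equations) is nonnegative. Expanding and using that the odd-degree terms cancel between $f$ and $1-f$ (this is the rank-one ``cancellation'' miracle alluded to in the text, which here survives because each \emph{block} is a single equation), one is left with sums of even-degree terms; the within-block even-degree terms are nonnegative by commonness of each constituent equation, and the across-block terms factor as products of block contributions which I would show are themselves squares or nonnegative by the same single-equation positivity. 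I expect the main obstacle to be precisely handling these across-block mixed terms: one needs that the ``defect'' functionals for $\Psi_1$ and $\Psi_2$ — the quantities whose nonnegativity encodes commonness of each equation — can be multiplied together while preserving sign, which requires knowing not merely that each equation is common but that the associated quadratic/higher forms are positive semidefinite in a suitable sense. This is plausible since for a single equation commonness is equivalent to a genuine Fourier-positivity statement, but making the product argument rigorous (and checking the sign of every surviving term) is where the work lies; the payoff is that $\Phi$ is common despite not being translation-invariant, answering Question \ref{q:klm5} affirmatively.
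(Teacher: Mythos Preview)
Your approach is the paper's: exploit the disjoint-block factorisation $T_\Phi(f) = T_{\Af}(f)\,T_{\Ai}(f)$, write $f = \alpha + g$, and expand. However, your description of where the difficulty lies is off in two places, and would lead you to look for the wrong inequality.

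First, the rank-one structure of each block gives the clean identities $T_{\Af}(\alpha+g) = \alpha^4 + T_{\Af}(g)$ and $T_{\Ai}(\alpha+g) = \alpha^5 + T_{\Ai}(g)$ (all intermediate terms vanish since $\hat g(0)=0$), so the expansion of $T_\Phi(f)$ has four terms, not $2^9$; and the genuine ``across-block'' product $T_{\Af}(g)T_{\Ai}(g)$ \emph{cancels} between $f$ and $1-f$, contrary to your expectation that it is the main obstacle. Second, the odd-degree term $T_{\Ai}(g)$ does \emph{not} cancel when $\alpha \neq 1/2$: one is left with
\[
T_\Phi(f) + T_\Phi(1-f) = \alpha^9 + (1-\alpha)^9 + \bigl(\alpha^5 + (1-\alpha)^5\bigr)T_{\Af}(g) + \bigl(\alpha^4 - (1-\alpha)^4\bigr)T_{\Ai}(g),
\]
and the actual work is to control the last term. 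The paper does this via the Fourier identities $T_{\Af}(g) = \sum_h |\hat g(h)|^4$ and $T_{\Ai}(g) = \sum_h |\hat g(h)|^4 \hat g(h)$, which give $|T_{\Ai}(g)| \leq \|\hat g\|_\infty\, T_{\Af}(g) \leq (1-\alpha)\,T_{\Af}(g)$ when $\alpha \leq 1/2$; substituting, the total coefficient of $T_{\Af}(g)$ collapses to $\alpha^4 \geq 0$, and convexity of $x \mapsto x^9$ finishes. So your plan works, but the positivity you need is not ``commonness of each constituent equation'' in the abstract --- it is the specific pointwise domination $|T_{\Ai}(g)| \leq (1-\alpha)\,T_{\Af}(g)$.
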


The system $\Phi$ has rank two and is not translation invariant. The proof of Theorem~\ref{t:klm5} is completed in Lemma \ref{l:phi-com}, which shows that $\Phi$ is common. Since uploading this document to the arXiv, the author has been made aware that Question \ref{q:klm5} has been independently resolved in $\F_2^n$ by Kr\'al'--Lamaison--Pach in forthcoming work. See \cite{KLP22}.

Finally, we also take this opportunity to address the following question of Kam\v cev--Liebenau--Morrison.

\begin{ques}\cite[Question 6.3]{KLMCom}\label{q:klm6}
Suppose that $\Psi$ is a linear system such that there is a set $A \subset \F_p^n$ such that the
density of monochromatic solutions in $(A, A^c)$ is less than $\alpha^t + (1-\alpha)^t$, where $\alpha = |A|/p^n$. Is $\Psi$ necessarily uncommon if one restricts one's attention to sets of density roughly $1/2$?
\end{ques}
For a suitable interpretation of `roughly', the answer to  \cite[Question 6.3]{KLMCom} is no, even with the stronger demand that the density of monochromatic solutions in $(A,A^c)$ is less than $2^{1-t}$. To see why, we will need to introduce some notation.

\vspace{0.5cm}

\textbf{Acknowledgments.} The author is indebted to Ben Green for valuable feedback on earlier versions of this document, and for his ongoing support. The author thanks Anita Liebenau for helpful correspondences.

\section{Notation, preliminaries, and an answer to Question \ref{q:klm6}}

Throughout this document we assume that $p$ is an odd prime. 

If $\Psi$ is a system of $m$ linear equations in $t$ variables, described by a $m\times t$ matrix $M_\Psi$ with entries in $\F_p$, and $f: \F_p^n \to \C$, then we define
\[ T_\Psi(f) := \E_{\vect x \in (\F_p^n)^t: M_\Psi \vect x = 0} f(x_1)f(x_2)\cdots f(x_t).\]

In this document we only consider the case of homogeneous linear systems: each equation is of the form $\sum_{i=1}^t a_ix_i=0$. Furthermore, we always assume that $t > m$ and that the system is of full rank. In particular, $\Psi$ has nontrivial solutions. 
As is standard and as we shall use throughout the document, $\E$ is a normalised sum, so here is shorthand for $\frac{1}{\#\{\vect x \in (\F_p^n)^t: M_\Psi \vect x = 0\}}\sum$. One observes that if $A\subset \F_p^n$ and $1_A$ is the characteristic function for the set $A$, then $T_\Psi(1_A)$ is a normalised count for the number of solutions to $\Psi$ in $A$. Sometimes, when the underlying linear system is clear from context, we may just use the notation $T(f)$ for brevity.

We use the Fourier transform on $\F_p^n$. Let $e_p(\cdot)$ be shorthand for $e^{\frac{2\pi i \cdot}{p}}$. For $h \in \F_p^n$, we define \[\hat f(h) := \E_{x \in \F_p^n} f(x)e_p(-h\cdot x),\] whereupon one obtains the Fourier inversion formula: \[f(x) = \sum_{h \in \F_p^n} \hat f(h)e_p(x\cdot h),\]
and Parseval's identity: \[\E_{x \in \F_p^n}|f(x)|^2 = \sum_{h \in \F_p^n}|\hat f(h)|^2.\]

 As is also standard, we will deal with a functional version of the notion of commonness in which one studies the set of functions $f:\F_p^n \to [0,1]$ rather than focusing only on the subset of these which are characteristic functions of sets. 

\begin{defn}\label{d:common}
A system of $t$ linear forms $\Psi$ is \textit{common} if, for all $n \geq 1$ and all $f:\F_p^n \to [0,1]$, we have 
\[T_\Psi(f) + T_\Psi(1-f) \geq 2^{1-t}.\]
\end{defn} 
Clearly if a system is common by the above definition then it is common when one restricts only to characteristic functions of sets. The other direction is recovered by a standard argument in which one constructs a random set $A$ from $f$ by choosing $x \in A$ with probability $f(x)$; we won't dwell on this further but remark that details are worked out for example in \cite{FPZ21}.

Sometimes it is more convenient to describe a linear system by parameterising its solution space. In this case we have equivalently: 
\[ T_\Psi(f) = \E_{y_1,\ldots, y_D \in \F_p^n} f(\psi_1(y_1,\ldots, y_D)) \cdots f(\psi_t(y_1,\ldots, y_D)),\]
where each $\psi_i$ is a linear form mapping $(\F_p^n)^D$ to $\F_p^n$,  $x_i = \psi_i(y_1,\ldots, y_D)$ for each $i$ and  $D = t-m$.

Returning to Question \ref{q:klm6}, we demonstrate a $\Psi$, $f:\F_p^n \to [0,1]$ such that $T_\Psi(f) < 2^{1-t}$, but $T_\Psi(f) \geq 2^{1-t}$ for all $f$ with $\E f = 1/2$. By randomly sampling as discussed above, this yields a negative answer to Question \ref{q:klm6}. 

Let $\Psi$ be the system whose solution space is parameterised by $(x,x+d,x+2d)$, so $t=3, m=1, D=2$ and $\psi_i(x,d) = x+(i-1)d$. Let $\Psi^{(1)}$ be parameterised by $(x, x+d, x+2d, y)$.  It is shown in \cite[Example 4.1]{SW17} that $\Psi^{(1)}$ is uncommon (with a sequence of sets $A_n$ for which $\left||A_n|/p^n-\frac{1}{2}\right| > \eps$ for all $n$ and some explicit $\eps>0$). On the other hand, it is well known that $\Psi$ is common (this goes back to Cameron, Cilleruelo and Serra \cite{CCS07}). For a function $f:\F_p^n \to [0,1]$ with $\alpha:= \E_{x\in \F_p^n}f(x)$, one sees that
\begin{equation}\label{e:one-free} T_{\Psi^{(1)}}(f) + T_{\Psi^{(1)}}(1-f) = \alpha T_\Psi(f) + (1-\alpha)T_\Psi(1-f),
\end{equation}
since we average over the free variable $y$. In fact, in general, we see that 
\begin{equation}\label{e:l-free}
T_{\Psi^{(l)}}(f) + T_{\Psi^{(l)}}(1-f) = \alpha^l T_\Psi(f) + (1-\alpha)^lT_\Psi(1-f);
\end{equation}
this formula will be useful later.

Following on from (\ref{e:one-free}), if $\alpha = 1/2$ then 
\[\inf_{f:\E f = \frac{1}{2}} T_{\Psi^{(1)}}(f) + T_{\Psi^{(1)}}(1-f) = \inf_{f:\E f = \frac{1}{2}} \frac{1}{2}(T_\Psi(f) + T_\Psi(1-f)) \geq 2^{-3},\]
since $\Psi$ is common. Thus $\Psi^{(1)}$ is common among average $1/2$ functions, but is not common in general, answering a suitable interpretation of Question \ref{q:klm6} in the negative.

This argument works more generally for adding free variables to any single equation in a odd number of variables: use \cite[Theorems 1.4, 1.5]{FPZ21}. Furthermore, any system of equations which is common among average $1/2$ functions but not Alon yields counterexamples to Question \ref{q:klm6} in this way.

\section{Proof of Theorem \ref{t:main-alon} and Theorem \ref{t:klm5}}\label{s:alon-proof}

Recall we let $\Phi$ be the following rank 2 system of equations in 9 variables
\begin{align*}
x_1 - x_2 + x_3 -x_4 &= 0\\
x_5 -x_6 + x_7 - x_8 + x_9 &=0.
\end{align*}
Let $\Af$ be shorthand for (the system containing) the first equation in $\Phi$, an additive $4$-tuple. Let $\Ai$ be shorthand for the second equation.

Since $\Phi$ is not translation-invariant and has rank two, to prove Theorem \ref{t:klm5} we just need to show that $\Phi$ is common. 

\begin{lemma}[Proof of Theorem \ref{t:klm5}]\label{l:phi-com}
$\Phi$ is common.
\end{lemma}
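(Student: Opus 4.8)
The plan is to exploit the product structure of $\Phi$. Since the two equations $\Af$ and $\Ai$ involve disjoint sets of variables ($x_1,\dots,x_4$ and $x_5,\dots,x_9$), the solution space of $\Phi$ is the product of the solution spaces of $\Af$ and $\Ai$, and hence for any $f$ we have $T_\Phi(f) = T_{\Af}(f)\cdot T_{\Ai}(f)$. Writing $g = 1-f$, commonness of $\Phi$ amounts to the inequality
\[
T_{\Af}(f)T_{\Ai}(f) + T_{\Af}(g)T_{\Ai}(g) \geq 2^{1-9} = 2^{-8}
\]
for all $f:\F_p^n\to[0,1]$. The idea is to bound each of $T_{\Af}, T_{\Ai}$ from below in terms of $\alpha = \E f$, and then to combine these bounds, perhaps together with the arithmetic-geometric mean inequality, to get past the constant $2^{-8}$.

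The key inputs are Fourier-analytic lower bounds for the individual additive-tuple counts. For the additive quadruple $\Af$, expanding via the Fourier transform gives $T_{\Af}(f) = \sum_{h}|\hat f(h)|^4 \geq |\hat f(0)|^4 = \alpha^4$, and similarly $T_{\Af}(g) \geq (1-\alpha)^4$. For $\Ai$, the form $x_5 - x_6 + x_7 - x_8 + x_9 = 0$ is an equation in an odd number (five) of variables; expanding it via Fourier inversion yields $T_{\Ai}(f) = \sum_h \hat f(h)^3 \overline{\hat f(h)}^2$ (up to a conjugation bookkeeping), whose main term is $\alpha^5$ but which also has a potentially problematic error term. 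Here one should use that a single equation in an odd number of variables is common (as recalled in the introduction, going back to Cameron--Cilleruelo--Serra for three-term progressions and in general to Saad--Wolf / Fox--Pham--Zhao): that is, $T_{\Ai}(f) + T_{\Ai}(g) \geq 2^{-4}$. More refined, one wants the stronger pointwise-in-$\alpha$ statements; I expect the proof to show something like $T_{\Ai}(f) \geq \alpha^5 - C\alpha(1-\alpha)$-type bounds, or to directly invoke the known sufficient conditions for commonness of a single equation together with the exact value of $T_{\Ai}$ on constants.

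The main obstacle is the cross term: $\Ai$ being an equation in an odd number of variables is only \emph{common}, not \emph{Sidorenko}, so $T_{\Ai}(f)$ can genuinely dip below $\alpha^5$, and one cannot simply multiply two lower bounds of the form ``$\geq\alpha^4$'' and ``$\geq\alpha^5$'' and sum. The resolution I would pursue is a case analysis on $\alpha$: when $\alpha$ is bounded away from $1/2$, the dominant term $\alpha^4(1-\alpha)^5$-style contribution together with crude bounds suffices; when $\alpha$ is close to $1/2$, one needs the full strength of $\Af$ and $\Ai$ being common in the symmetric regime, so that $T_{\Af}(f)+T_{\Af}(g)$ and $T_{\Ai}(f)+T_{\Ai}(g)$ are each close to their extremal values $2^{-3}$ and $2^{-4}$, and then a convexity/rearrangement argument gives $T_{\Af}(f)T_{\Ai}(f) + T_{\Af}(g)T_{\Ai}(g)\geq 2^{-3}\cdot 2^{-5} = 2^{-8}$ with equality exactly at $f\equiv 1/2$. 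Making the ``close to $1/2$'' and ``bounded away from $1/2$'' regimes meet cleanly — i.e. getting a single clean inequality rather than an asymptotic statement — is the technical heart, and I expect it to hinge on writing $T_{\Af}(f) = \alpha^4 + \sum_{h\neq 0}|\hat f(h)|^4$ and $T_{\Ai}(f) = \alpha^5 + (\text{real part of})\sum_{h\neq 0}\hat f(h)^3\overline{\hat f(h)}^2$ and carefully tracking how the non-principal Fourier mass interacts across the two factors, using $\sum_{h\neq 0}|\hat f(h)|^2 = \E f^2 - \alpha^2 \leq \alpha - \alpha^2$.
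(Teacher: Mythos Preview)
Your setup is right --- the product structure $T_\Phi(f)=T_{\Af}(f)T_{\Ai}(f)$ and the Fourier decomposition $T_{\Af}(f)=\alpha^4+\sum_{h\neq 0}|\hat f(h)|^4$ are exactly the starting point --- but the proposal has a genuine gap: the ``convexity/rearrangement'' step you invoke near $\alpha=1/2$ does not follow from the commonness of $\Af$ and $\Ai$ separately. Knowing $T_{\Af}(f)+T_{\Af}(1-f)\geq 2^{-3}$ and $T_{\Ai}(f)+T_{\Ai}(1-f)\geq 2^{-4}$ does not yield $T_{\Af}(f)T_{\Ai}(f)+T_{\Af}(1-f)T_{\Ai}(1-f)\geq 2^{-8}$ without an ordering hypothesis you do not have (and which can genuinely fail: $T_{\Ai}(f)$ can dip below $\alpha^5$ while $T_{\Af}(f)>\alpha^4$). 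So the case analysis you outline does not close.

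The missing idea, which also removes the need for any case split, is to work with the balanced function $g=f-\alpha$ and exploit two facts simultaneously. First, for a single equation one has exactly $T_{\Af}(\alpha+g)=\alpha^4+T_{\Af}(g)$ and $T_{\Ai}(\alpha+g)=\alpha^5+T_{\Ai}(g)$, so expanding and adding gives
\[
T_\Phi(f)+T_\Phi(1-f)=\alpha^9+(1-\alpha)^9+\bigl(\alpha^5+(1-\alpha)^5\bigr)T_{\Af}(g)+\bigl(\alpha^4-(1-\alpha)^4\bigr)T_{\Ai}(g),
\]
the $T_{\Af}(g)T_{\Ai}(g)$ terms cancelling by parity. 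Second --- and this is the key estimate you are missing --- the Fourier expressions $T_{\Af}(g)=\sum_h|\hat g(h)|^4$ and $T_{\Ai}(g)=\sum_h|\hat g(h)|^4\hat g(h)$ give $|T_{\Ai}(g)|\leq \|\hat g\|_\infty\,T_{\Af}(g)\leq (1-\alpha)\,T_{\Af}(g)$ (using $\|\hat g\|_\infty\leq\|g\|_\infty\leq\max(\alpha,1-\alpha)$). Plugging this in with, say, $\alpha\leq 1/2$ collapses the last two terms to a nonnegative multiple of $T_{\Af}(g)\geq 0$, leaving $\alpha^9+(1-\alpha)^9\geq 2^{-8}$ by convexity. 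No regime-splitting is required.
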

\begin{proof}
Let $f:\F_p^n \to [0,1]$ have $\E f = \alpha$ and let $g =  f - \alpha$. We observe that $T_\Phi(f) = T_{\Af}(f)T_{\Ai}(f)$. Furthermore, since $\Af$ has rank one, one sees that $T_{\Af}(\alpha + g) = \alpha^4 + T_{\Af}(g)$, and similarly for $\Ai$. Thus, 
\[ T_\Phi( f) = T_\Phi (\alpha + g) = \alpha^9 + \alpha^5 T_{\Af}(g) + \alpha^4 T_{\Ai}(g) + T_\Phi (g),\]
so one computes
\begin{equation}\label{e:A4A5}
T_\Phi (f) + T_\Phi (1- f) = \alpha^9 + (1-\alpha)^9 + \left( \alpha^5 + (1-\alpha)^5\right)T_{\Af}(g) + \left( \alpha^4 - (1-\alpha)^4\right)T_{\Ai}(g).
\end{equation}
Recall that $T_{\Af}(g)$ is the ($4$th power of the) $\ell^4$-norm of $g$ on the Fourier side so in particular $T_{\Af}(g) \geq 0$. Thus if $\alpha = 1/2$ then we trivially have $T_\Phi (f) + T_\Phi (1-  f)  \geq 1/2^8$. Assume without loss of generality $\alpha < 1/2$. We have $T_{\Af}(g) = \sum_h |\hat g(h)|^4$ and $T_{\Ai}(g) = \sum_h |\hat g(h)|^4\hat g(h)$, and so $|T_{\Ai}(g)|\leq T_{\Af}(g)||\hat g||_\infty \leq T_{\Af}(g)(1-\alpha)$. Thus continuing from (\ref{e:A4A5}),
\[T_\Phi (f) + T_\Phi (1- f) \geq \alpha^9 + (1-\alpha)^9 + T_{\Af}(g) \left(\alpha^5 + \alpha^4(1-\alpha)  \right) \geq \alpha^9 + (1-\alpha)^9 \geq 2^{-8},\]
where the final inequality follows from convexity. 
\end{proof}

Now we proceed to the proof of Theorem \ref{t:main-alon}. Since $\Phi$ is not translation-invariant, it is not Sidorenko. To prove Theorem \ref{t:main-alon} we will show that $\Phi$ is Alon, i.e. $\Phi^{(l)}$ is common for all $l$ sufficiently large. First we compile some properties of $\Phi$. 

Recall that we have assumed that $p$ is odd. 
\begin{lemma}\label{l:phi-prev}
There is a constant $c_0>0$ such that if $f: \F_p^n\to [0,1]$ has $\E f \geq 0.45$ then $T_\Phi(f)\geq c_0$.
\end{lemma}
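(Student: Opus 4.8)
The plan is to exploit the fact that the two equations of $\Phi$ involve disjoint variables, so $T_\Phi(f) = T_{\Af}(f)T_{\Ai}(f)$, together with the Fourier-side formulae recorded in the proof of Lemma~\ref{l:phi-com}. Write $\alpha = \E f \in [0.45,1]$ and $g = f-\alpha$; since $\Af$ and $\Ai$ have rank one, $T_\Phi(f) = (\alpha^4 + T_{\Af}(g))(\alpha^5 + T_{\Ai}(g))$, where $T_{\Af}(g) = \sum_h |\hat g(h)|^4 \ge 0$ and $|T_{\Ai}(g)| = \big|\sum_h |\hat g(h)|^4\hat g(h)\big| \le \|\hat g\|_\infty\, T_{\Af}(g)$. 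The point is that $\|\hat g\|_\infty = \max_{h\neq 0}|\hat f(h)|$ can be bounded by an absolute constant strictly smaller than $\alpha$: granting $\|\hat g\|_\infty \le 1/3$ (see the next paragraph), we get on the one hand $T_{\Ai}(g) \ge -\tfrac13 T_{\Af}(g)$, and on the other hand $T_{\Af}(g) = \sum_{h\neq 0}|\hat g(h)|^4 \le \tfrac19\sum_{h\neq 0}|\hat g(h)|^2 = \tfrac19(\E f^2 - \alpha^2) \le \tfrac19\alpha(1-\alpha) \le \tfrac1{36}$. Since $\alpha^5 \ge 0.45^5 > \tfrac1{108} \ge \tfrac13 T_{\Af}(g)$, the factor $\alpha^5 - \tfrac13 T_{\Af}(g)$ is positive, so
\[ T_\Phi(f) \ge (\alpha^4 + T_{\Af}(g))\big(\alpha^5 - \tfrac13 T_{\Af}(g)\big) \ge \alpha^4\big(\alpha^5 - \tfrac1{108}\big) \ge 0.45^4\big(0.45^5 - \tfrac1{108}\big) =: c_0 > 0, \]
using that $\alpha \mapsto \alpha^9 - \alpha^4/108$ is increasing on $[1/3,1]$. (Note that $0.45 \approx (1/54)^{1/5}$ is, up to a small margin, exactly the threshold that makes the penultimate inequality go through.)

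The one genuinely new ingredient is the estimate $\|\hat g\|_\infty = \max_{h\neq 0}|\hat f(h)| \le 1/3$, valid for every $f:\F_p^n \to [0,1]$; the elementary bound $|\hat f(h)| \le \min(\alpha,1-\alpha)$ implicit in Lemma~\ref{l:phi-com} is useless here since $\alpha$ may be close to $1/2$. Fix $h\neq 0$ and partition $\F_p^n$ into the $p$ equal-sized affine hyperplanes $V_j = \{x : h\cdot x = j\}$, $j\in\F_p$; letting $\mu_j \in [0,1]$ be the mean of $f$ on $V_j$, one has $\hat f(h) = \tfrac1p\sum_{j\in\F_p}\mu_j e_p(-j)$. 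The map $(\mu_j)_j \mapsto \big|\sum_j \mu_j e_p(-j)\big|$ is convex on the cube $[0,1]^p$, hence attains its maximum at a vertex $(\mu_j) = (1_{j\in S})$; and among subsets $S \subseteq \F_p$ the quantity $\big|\sum_{j\in S}e_p(-j)\big|$ is maximised when $S$ is a block of consecutive residues (for each target phase $\theta$ the optimal $S$ consists of exactly those $j$ with $\cos(2\pi j/p - \theta) > 0$, which form an arc), in which case it equals $\sin(\pi|S|/p)/\sin(\pi/p) \le \cos(\pi/(2p))/\sin(\pi/p) = 1/(2\sin(\pi/(2p)))$. Hence $|\hat f(h)| \le 1/(2p\sin(\pi/(2p)))$, and this is at most $1/3$ because $\sin t \ge (3/\pi)t$ on $(0,\pi/6]$ while $\pi/(2p) \le \pi/6$ for $p \ge 3$.

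The main obstacle is precisely this extremal Fourier bound: one must improve on $|\hat f(h)| \le \min(\alpha,1-\alpha) \le 1/2$, and the whole computation only closes if the replacement constant is below roughly $0.42$. The convexity-plus-arc argument furnishes the sharp bound (the extremiser is an indicator of an interval), and $1/3$ leaves comfortable room; everything else is bookkeeping. The only other point to watch is the final positivity $\alpha^5 > \tfrac13 T_{\Af}(g)$, which is the reason the hypothesis is stated with the explicit numeral $0.45$ rather than $1/2$ — but $0.45$ clears the threshold with a little to spare, so no delicacy is required there.
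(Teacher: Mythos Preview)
Your proof is correct, but the route differs from the paper's in one key lemma: the bound on $\|\hat g\|_\infty$. The paper observes that since $g$ is real-valued and $p$ is odd, $|\hat g(h)|=|\hat g(-h)|$ with $h\neq -h$, so pairing these two terms inside Parseval gives
\[
2\|\hat g\|_\infty^2 \le \sum_h |\hat g(h)|^2 = \E|g|^2 \le (1-\alpha)^2,
\]
whence $\|\hat g\|_\infty \le (1-\alpha)/\sqrt 2$; the rest is bookkeeping of the same flavour as yours (expand $T_\Phi(\alpha+g)$, use $|T_{\Ai}(g)|\le \|\hat g\|_\infty T_{\Af}(g)$ and $T_{\Af}(g)\le (1-\alpha)^4/2$, then check a one-variable polynomial is positive at $0.45$ and invoke monotonicity). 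You instead prove the sharp extremal inequality $\max_{h\neq 0}|\hat f(h)| \le 1/(2p\sin(\pi/(2p))) \le 1/3$ by a convexity-plus-interval argument. Your bound is strictly stronger ($1/3$ versus roughly $0.389$ at $\alpha=0.45$) and gives a visibly larger $c_0$, at the cost of a longer auxiliary argument; the paper's Parseval pairing is quicker and suffices. Both approaches genuinely use that $p$ is odd (yours via $p\ge 3$ in the final trigonometric step, the paper's via $h\neq -h$). One incidental remark: your parenthetical that the threshold is about $(1/54)^{1/5}$ is slightly off --- the binding condition is $\alpha^5>1/108$, whose root is $(1/108)^{1/5}\approx 0.39$, so $0.45$ clears it by a wider margin than you suggest --- but this does not affect the argument.
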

\begin{proof}
Let $\alpha \in (0,1/2)$ and let $f: \F_p^n \to [0,1]$ have $\E f =\alpha$. Let $g = f-\alpha$. Since $g$ is real-valued, we have $|\hat g(h) | = |\hat g(-h)|$ for all $h$, and so invoking the fact that $\hat g(0)=0$, the fact that the characteristic is not 2, and Parseval's identity, we obtain
\begin{equation}\label{e:g-hat-infty}
2\sup_h |\hat g(h)|^2 \leq \sum_h |\hat g(h)|^2 = \E |g(x)|^2 \leq (1-\alpha)^2.
\end{equation}
Thus $||\hat g||_\infty \leq (1-\alpha)/\sqrt{2}$. By Fourier-inversion as above, it follows that $|T_{\Ai}(g)| \leq \frac{1-\alpha}{\sqrt 2}T_{\Af}(g)$, and by Parseval again $T_{\Af}(g) \leq (1-\alpha)^4/2$. Thus 
\begin{align*}
T_\Phi(f) = T_\Phi(\alpha + g) &=  \alpha^9 +  \alpha^5 T_{\Af}(g) + \alpha^4 T_{\Ai}(g) + T_{\Af}(g)T_{\Ai}(g)\\
&\geq \alpha^9 + T_{\Af}(g) \left(\alpha^5 - \frac{(1-\alpha)\alpha^4}{\sqrt{2}} - \frac{(1-\alpha)^5}{2\sqrt 2}\right).
\end{align*} 
One checks that the polynomial $q(x)= x^5 - \frac{(1-x)x^4}{\sqrt{2}} - \frac{(1-x)^5}{2\sqrt 2}$ is negative for $x \in (0,1/2)$ and so 
\[T_\Phi(f) \geq  \alpha^9 + \frac{(1-\alpha)^4}{2} \left(\alpha^5 - \frac{(1-\alpha)\alpha^4}{\sqrt{2}} - \frac{(1-\alpha)^5}{2\sqrt 2}\right).\]
Defining $\tilde q(x)=x^9 + \frac{1}{2}(1-x)^4\left(x^5-\frac{(1-x)x^4}{\sqrt{2}} - \frac{(1-x)^5}{2\sqrt{2}}\right)$, one checks that $\tilde q(0.45) > 0$, which proves the result when $\E f = 0.45$.  To conclude we observe that $\inf_{f : \E f = \alpha}T_\Phi(f)$ is monotonic in $\alpha$. 
\end{proof} 

Next we show that $\Phi$ possesses a `local Sidorenko' property with respect to the $||\hat \cdot ||_\infty$ norm.

\begin{lemma}\label{l:local-sid}
There is a constant $c_1>0$ such that, upon letting $f: \F_p^n \to [0,1]$,  $\alpha := \E f$ and $g := f - \alpha$,  if $\frac{1}{3} \leq \alpha \leq \frac{2}{3}$ and $||\hat g ||_\infty \leq c_1$ then $T_\Phi(f) \geq \alpha^9$. 
\end{lemma}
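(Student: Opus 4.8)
The plan is to recycle the algebraic identity from the proof of Lemma~\ref{l:phi-com}. With $\alpha := \E f$ and $g := f-\alpha$, the factorisation $T_\Phi = T_{\Af}\cdot T_{\Ai}$ together with the rank-one expansions $T_{\Af}(\alpha+g)=\alpha^4+T_{\Af}(g)$ and $T_{\Ai}(\alpha+g)=\alpha^5+T_{\Ai}(g)$ gives
\[ T_\Phi(f)-\alpha^9 \;=\; \alpha^5 T_{\Af}(g) + \alpha^4 T_{\Ai}(g) + T_{\Af}(g)T_{\Ai}(g), \]
so it suffices to show the right-hand side is nonnegative once $||\hat g||_\infty$ is small. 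The key point is that $T_{\Af}(g)=\sum_h|\hat g(h)|^4\ge 0$ dominates the error term: by Fourier inversion $T_{\Ai}(g)=\sum_h|\hat g(h)|^4\hat g(h)$ (in particular it is real, as $g$ is real-valued), whence $|T_{\Ai}(g)|\le ||\hat g||_\infty\, T_{\Af}(g)$.

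Writing $S:=T_{\Af}(g)\ge 0$, the display above is therefore at least $S\bigl(\alpha^5-(\alpha^4+S)||\hat g||_\infty\bigr)$. If $S=0$ we are done, so it remains to check that the bracket is nonnegative. Here I would use only the crude bounds $\alpha^5\ge 3^{-5}$ (from $\alpha\ge 1/3$), $\alpha^4\le 1$, and, via Parseval, $S=\sum_h|\hat g(h)|^4\le ||\hat g||_\infty^2\sum_h|\hat g(h)|^2=||\hat g||_\infty^2\,\E g^2\le \tfrac14||\hat g||_\infty^2$, using $\E g^2=\E f^2-\alpha^2\le \alpha(1-\alpha)\le\tfrac14$. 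Thus the bracket is at least $3^{-5}-||\hat g||_\infty-\tfrac14||\hat g||_\infty^3$, which is positive provided $||\hat g||_\infty\le c_1$ for any sufficiently small absolute constant $c_1$ (e.g. $c_1=10^{-3}$ works, since $3^{-5}>10^{-3}$).

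There is essentially no serious obstacle: the argument collapses to a one-line estimate once the identity is in place. The only points needing care are (i) confirming the precise Fourier shapes of $T_{\Af}(g)$ and $T_{\Ai}(g)$ and the reality of the latter, so that the cancellation $|T_{\Ai}(g)|\le||\hat g||_\infty T_{\Af}(g)$ is legitimate, and (ii) fixing an explicit admissible value of $c_1$; both are routine. Note that the hypothesis $\alpha\le 2/3$ is not actually needed for this bound (the estimate $\E g^2\le 1/4$ and $\alpha^5\ge 3^{-5}$ hold regardless), but it is harmless and matches the range in which the lemma will be applied.
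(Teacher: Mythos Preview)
Your proposal is correct and follows essentially the same approach as the paper: both start from the identity $T_\Phi(f)-\alpha^9=\alpha^5 T_{\Af}(g)+\alpha^4 T_{\Ai}(g)+T_{\Af}(g)T_{\Ai}(g)$, use $|T_{\Ai}(g)|\le \|\hat g\|_\infty T_{\Af}(g)$, and then control the residual via Parseval to reduce to a one-variable polynomial inequality in $\|\hat g\|_\infty$. The only cosmetic differences are that the paper keeps $\alpha$ symbolic (obtaining the bracket $\alpha^5-\alpha^4 x-x^3$ after bounding $T_{\Af}(g)\le \|\hat g\|_\infty^2$ via $\|g\|_\infty\le 1$), whereas you factor $(\alpha^4+T_{\Af}(g))T_{\Ai}(g)$ first and then insert the numerical bounds $\alpha^5\ge 3^{-5}$, $\alpha^4\le 1$, $\E g^2\le 1/4$; your observation that the hypothesis $\alpha\le 2/3$ is not actually used is also correct.
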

\begin{proof}
Let $\alpha \in [\frac{1}{3},\frac{2}{3}]$. For $g:\F_p^n \to [-\alpha, 1-\alpha]$ with $\E_x g(x)=0$ we have, as we saw in Lemma \ref{l:phi-com}, that $T_{\Ai}(g) \leq ||\hat g||_\infty T_{\Af}(g) \leq ||\hat g ||_\infty^3$, where we have used Parseval and the bound $||g||_\infty < 1$. Thus
\begin{align*}
T_\Phi(\alpha+g) &= \alpha^9 + \alpha^5T_{\Af}(g) + \alpha^4T_{\Ai}(g) + T_{\Af}(g)T_{\Ai}(g) \\
&\geq \alpha^9 + T_{\Af}(g)(\alpha^5 - \alpha^4||\hat g||_\infty - ||\hat g||_\infty^3).
\end{align*}
We have that $T_{\Af}(g)\geq 0$ so it remains to argue that there is a constant $c_1$ such that for all $0\leq x \leq c_1$ and all $\frac{1}{3} \leq \alpha \leq  \frac{2}{3}$, we have $\alpha^5 - \alpha^4 x -x^3 \geq 0$. This is easily seen to be true.
\end{proof}

In our final lemma before the proof of Theorem \ref{t:main-alon}, we establish the following inequality.

\begin{lemma}\label{l:phi-geomcom}
There are constants $c_2, c_3, C_4 > 0$ such that upon letting $ f: \F_p^n \to [0,1]$, $\alpha := \E f$ and $g:= f -\alpha$ we have that if $|\alpha - \frac{1}{2}|\leq c_2$ then 
\[ T_\Phi(f) T_\Phi(1-f) \geq 2^{-18} + c_3||\hat g||_\infty^4 - C_4|\alpha - \frac{1}{2}|.\]
\end{lemma}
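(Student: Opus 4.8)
The plan is to combine the product structure $T_\Phi = T_{\Af}\cdot T_{\Ai}$ with the rank-one expansions already exploited in Lemmas~\ref{l:phi-com} and~\ref{l:phi-prev}. Write $A := T_{\Af}(g) = \sum_h|\hat g(h)|^4 \ge 0$ and $B := T_{\Ai}(g) = \sum_h|\hat g(h)|^4\hat g(h)$, so that $|B|\le ||\hat g||_\infty A$, and set $\beta := 1-\alpha$. Since $\Af$ and $\Ai$ have rank one, $T_{\Af}(\alpha+g)=\alpha^4+A$ and $T_{\Ai}(\alpha+g)=\alpha^5+B$; moreover $\widehat{-g}$ has the same $\ell^4$-norm as $\hat g$ while $T_{\Ai}$ is odd, so $T_{\Af}(1-f)=\beta^4+A$ and $T_{\Ai}(1-f)=\beta^5-B$. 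Hence
\[ T_\Phi(f)\,T_\Phi(1-f) = (\alpha^4+A)(\beta^4+A)(\alpha^5+B)(\beta^5-B). \]
Substituting $\alpha=\tfrac12+\delta$, $\beta=\tfrac12-\delta$ with $\delta := \alpha-\tfrac12$, the right-hand side becomes an explicit polynomial $P(\delta)$ whose coefficients depend only on $A$ and $B$; the strategy is to establish the desired inequality at $\delta=0$ and then control the perturbation in $\delta$ by a crude Lipschitz estimate.

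At $\delta=0$ we have $P(0)=(\tfrac1{16}+A)^2(\tfrac1{1024}-B^2)$, and since $2^{-18}=(\tfrac1{16})^2\tfrac1{1024}$, expanding gives
\[ P(0) = 2^{-18} + \tfrac{A}{8192} + \tfrac{A^2}{1024} - \tfrac{B^2}{256} - \tfrac{AB^2}{8} - A^2B^2. \]
I would then invoke three elementary bounds valid for any such $f$: $||\hat g||_\infty^4\le A$ (a single Fourier coefficient); $A\le ||\hat g||_\infty^2\,\E g^2\le ||\hat g||_\infty^2/4$ (Parseval together with $\E g^2\le\alpha(1-\alpha)\le\tfrac14$); and $2||\hat g||_\infty^2\le \E g^2\le\tfrac14$, i.e.\ $||\hat g||_\infty^2\le\tfrac18$ (as in \eqref{e:g-hat-infty}, using $\hat g(0)=0$, that $g$ is real, and that $p$ is odd). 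Feeding $|B|^2\le ||\hat g||_\infty^2 A^2$ into the display, the term $-\tfrac{B^2}{256}$ is absorbed by $\tfrac{A^2}{1024}$, and the remaining two negative terms are of size $O(||\hat g||_\infty^8)$, which the positive contribution $\tfrac{A}{8192}\ge \tfrac1{8192}||\hat g||_\infty^4$ dominates because $||\hat g||_\infty^8\le \tfrac1{64}||\hat g||_\infty^4$. A short calculation then yields $P(0)\ge 2^{-18} + 2^{-15}||\hat g||_\infty^4$, so one may take $c_3 = 2^{-15}$.

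For the perturbation step, note that whenever $|\delta|\le\tfrac12$ the quantities $\tfrac12\pm\delta$ lie in $[0,1]$, and on the admissible range $0\le A\le\tfrac1{32}$ and $|B|\le A\le\tfrac1{32}$; hence each of the four factors of $P(\delta)$ and each of their $\delta$-derivatives is bounded in absolute value by an absolute constant, so by the product rule $|P'(\delta)|\le C_4$ uniformly over all admissible $(A,B)$ and all $|\delta|\le\tfrac12$. Therefore $P(\delta)\ge P(0)-C_4|\delta|$, and since $|\delta| = |\alpha-\tfrac12|$, combining with the previous paragraph proves the lemma for any $c_2\le\tfrac12$.

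The only delicate point is the $\delta=0$ calculation: one must track the signs of the cross terms and use the sharp-ish estimates $||\hat g||_\infty^2\le\tfrac18$ and $A\le ||\hat g||_\infty^2/4$ (rather than the trivial $A\le 1$) to be certain that the linear-in-$A$ term outweighs the quartic-and-higher negative terms. The Lipschitz step is entirely routine.
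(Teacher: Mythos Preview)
Your proof is correct and follows essentially the same approach as the paper's: both factor $T_\Phi(f)T_\Phi(1-f)=(\alpha^4+A)(\beta^4+A)(\alpha^5+B)(\beta^5-B)$, reduce to $\alpha=\tfrac12$ via a Lipschitz/$O(|\alpha-\tfrac12|)$ estimate, and then at $\alpha=\tfrac12$ use $|B|\le \|\hat g\|_\infty A$ together with $\|\hat g\|_\infty^2\le \tfrac18$ (from the $h\leftrightarrow -h$ pairing) to show $(2^{-4}+A)^2(2^{-10}-B^2)\ge 2^{-18}+cA$. The only cosmetic difference is that the paper packages the $\delta=0$ step as positivity of the cubic $q(x)=2^{-10}+2^{-8}x-2^{-3}x^2-x^3$ on the range $x\in[0,2^{-5}]$, whereas you expand fully and compare term by term; your version has the minor advantage of yielding an explicit $c_3=2^{-15}$.
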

\begin{proof}
Letting $T_4:=T_{\Af}(g)$ and $T_5 := T_{\Ai}(g)$ for brevity we have
\begin{align*}
T_\Phi(f)T_\Phi(1-f)&=T(\alpha + g)T(1-\alpha -g)\\
&=(\alpha^9 + \alpha^5 T_4 + \alpha^4T_5 + T_4T_5)((1-\alpha)^9 + (1-\alpha)^5 T_4 -(1-\alpha)^4 T_5 - T_4T_5) \\
 &= \left(2^{-9} + 2^{-5}T_4 + 2^{-4}T_5 + T_4T_5\right)\left(2^{-9} + 2^{-5}T_4 - 2^{-4} T_5 - T_4T_5\right) + O(|\alpha - \frac{1}{2}|)\\
&=  \left(2^{-4} + T_4\right)^2\left(2^{-10} - T_5^2 \right) + O(|\alpha - \frac{1}{2}|).
\end{align*}
We showed in (\ref{e:g-hat-infty}) that $||\hat g||_\infty \leq ||g||_\infty/\sqrt{2}$ and so $||\hat g||_\infty \leq \frac{1}{2\sqrt{2}} + O(|\alpha - \frac{1}{2}|)$. Thus here we have
\[T_5 \leq  ||\hat g||_\infty T_4 \leq \frac{1}{2\sqrt{2}} T_4 + O(|\alpha-\frac{1}{2}|)\leq 2^{-6.5} + O(|\alpha - \frac{1}{2}|).\]
Continuing from above, 
\begin{align*}
T_\Phi(f)T_\Phi(1-f) &=  \left(2^{-4} + T_4\right)^2\left(2^{-10} - T_5^2 \right) + O(|\alpha - \frac{1}{2}|) \\
 &\geq \left(2^{-4} + T_4\right)^2\left(2^{-10} - 2^{-3}T_4^2 \right) + O(|\alpha - \frac{1}{2}|)\\
 &= 2^{-18} + 2^{-3}T_4\left(2^{-10} + 2^{-8}T_4 - 2^{-3}T_4^2-T_4^3\right)+ O(|\alpha - \frac{1}{2}|).
 \end{align*}
One checks that the polynomial $q(x)=2^{-10}+2^{-8}x-2^{-3}x^2 -x^3$ is positive for $x \in [0,0.07]$. But $0 \leq T_4 \leq 2^{-5} +O(|\alpha - \frac{1}{2}|)$, and so for $c_2$ chosen small enough we have that $T_4 \in [0,0.07]$. Finally note that $T_4 \geq ||\hat g||_\infty^4$. This completes the proof.
\end{proof}

We note that explicit values for the constants $c_0$, $c_1$, $c_2$, $c_3$ and $C_4$ could be computed if one so wished. As a consequence, one could extract from the following proof an explicit $l_0$ such that for all $l \geq l_0$, $\Phi^{(l)}$ is common. 

\begin{proof}[Proof of Theorem \ref{t:main-alon}]
Recalling (\ref{e:l-free}), we wish to show that for $l$ sufficiently large
\[\alpha^l T_\Phi(f) + (1-\alpha)^lT_\Phi(1-f) \geq 2^{-l-8},\]
where $\alpha := \E f$. Let $g: = f - \alpha$.  We split into three cases: the first is when $|\alpha - \frac{1}{2}|$ is suitably small and $||\hat g||_\infty$ is suitably small; the second is when $|\alpha - \frac{1}{2}|$ is suitably small and $||\hat g||_\infty$ is large; the third is when $|\alpha - \frac{1}{2}|$ is large. In this proof, all asymptotic notation is with respect to $l$.

Firstly, if $\alpha \in [\frac{1}{3},\frac{2}{3}]$ and $||\hat g||_\infty \leq c_1$, then from Lemma \ref{l:local-sid} applied to both $f$ and $1-f$ we have 
\[\alpha^l T(f) + (1-\alpha)^lT(1-f) \geq \alpha^{l+9} + (1-\alpha)^{l+9} \geq 2^{-l-8},\]
where we use convexity for the second inequality.

For the second case, let $c_5$ be a constant depending on $c_1,c_3$ to be determined shortly, and assume that $|\alpha - \frac{1}{2}| \leq \frac{c_5}{\sqrt{l}}$ and that $||\hat g||_\infty \geq c_1$. By the AM-GM inequality, we have 
\[ \alpha^lT(f) + (1-\alpha)^lT(1-f) \geq 2(\alpha(1-\alpha))^{l/2} \sqrt{T(f)T(1-f)}.\]
Invoking Lemma \ref{l:phi-geomcom} by taking $l$ suitably large so $\frac{c_5}{\sqrt{l}}\leq c_2$, we have
\[T(f)T(1-f) \geq 2^{-18} +c_3c_1^4 - \frac{C_4c_5}{\sqrt{l}} \geq 2^{-18} + \frac{1}{2}c_3c_1^4,\]
where we have again taken $l$ suitably large for the second inequality. Continuing from above, there is a positive constant $c_6>0$ which may be computed explicitly in terms $c_1,c_3$ such that  
\begin{align*}
\alpha^lT(f) + (1-\alpha)^lT(1-f) &\geq \left(\alpha(1-\alpha)\right)^{l/2}(2^{-8} + c_6) \\
&= \left(\frac{1}{4}-|\alpha - \frac{1}{2}|^2\right)^{l/2}(2^{-8} + c_6)\\
& \geq \left(1- \frac{4c_5^2}{l}\right)^{l/2}2^{-l}(2^{-8} + c_6)\\
&= \left(1- \frac{4c_5^2}{l}\right)^{l/2}(1+2^8c_6)2^{-l-8}.
\end{align*}
For $c_5$ fixed, $\left(1- \frac{4c_5^2}{l}\right)^{l/2}$ converges to $e^{-2c_5^2}$ as $l\to \infty$. Let $c_5$ be small enough so that $e^{-2c_5^2}(1+2^8c_6)> 1$ and then let $l$ be large enough so that $\left(1- \frac{4c_5^2}{l}\right)^{l/2}(1+2^8c_6)\geq 1$. This completes the proof in this case. 

Finally, assume that $|\alpha - \frac{1}{2}| \geq \frac{c_5}{\sqrt{l}}$. Assume with out loss of generality that $\alpha \geq \frac{1}{2}$. Then we have  by Lemma \ref{l:phi-prev} that
\[\alpha^lT(f) + (1-\alpha)^lT(1-f) \geq \alpha^lT(f) \geq c_0 \left(\frac{1}{2} + \frac{c_5}{\sqrt{l}}\right)^l = 2^{-l}c_0\left(1 + \frac{2c_5}{\sqrt{l}}\right)^{l}.\]
Taking $l$ sufficiently large, this is greater than  $2^{-l-8}$, which completes the proof.
\end{proof}

\section{Geometric commonness}\label{s:geomcom}

In the previous section we obtained a lower bound on the geometric mean of $T(f)$ and $T(1-f)$ in order to conclude that $\Phi$ is Alon. In this section we will introduce the notion of geometric commonness and see in Proposition \ref{p:main} that a bound of this form is in fact necessary. 

\begin{defn}
A system of $t$ linear forms $\Psi$ is \textit{geometrically common} if for all $n\geq 1$ and all $f: \F_p^n \to [0,1]$ with $\E_xf(x) = 1/2$, \[T_\Psi(f)T_\Psi(1-f) \geq 2^{-2t}.\] 
\end{defn}

We recall that a system of equations in $t$ variables is Sidorenko if $T_\Psi(f) \geq (\E_xf(x))^t$ for all $f:\F_p^n \to [0,1]$. It is clear that if $\Psi$ is Sidorenko then it is geometrically common. Furthermore the AM-GM inequality yields that if $\Psi$ is geometrically common then it is common among functions with density $1/2$ (though as we have seen this does not imply commonness in general).

\begin{defn}\label{d:weak-alon}
A system of $t$ linear forms $\Psi$ is \textit{weakly Alon} if for all $\eps>0$, for all sufficiently large $l\geq l_\eps$, for all $n\geq 1$ and for all $f:\F_p^n\to [0,1]$,  
\[T_{\Psi^{(l)}}(f) + T_{\Psi^{(l)}}(1-f)> 2^{-l}\left(2^{1-t}-\eps\right).\]
\end{defn}
Conversely, the condition that a system is not weakly Alon asks that the sequence of systems $\Psi^{(l)}$ is not common infinitely often by a suitably uniform margin. 

\begin{defn}\label{d:prev}
Let $\alpha \in (0,1)$. A system of linear forms $\Psi$ is \textit{$\alpha$-prevalent} if there exists $c(\alpha)>0$ such that for all $n$, for every $f:\F_p^n \to [0,1]$ with $\E f = \alpha$, we have $T_\Psi(f) \geq c(\alpha)$.
\end{defn}
We note that prevalence is monotonic in $\alpha$: if $\alpha < \beta$ and $\Psi$ is $\alpha$-prevalent then it is $\beta$-prevalent.

\begin{remark}\label{r:prev}
By (a generalised) Szemer\'edi's theorem, and Varnavides' strengthening thereof, all translation-invariant systems are $\alpha$-prevalent for all $\alpha \in (0,1)$. It is not difficult to see that the converse holds too, so $\alpha$-prevalence for all $\alpha \in (0,1)$ is equivalent to translation-invariance. Schur triples are $(\frac{1}{3} + \eps)$-prevalent but not $\frac{1}{3}$-prevalent if $p=3$; $(\frac{2}{5} + \eps)$-prevalent but not $\frac{2}{5}$-prevalent if $p=5$; $(\frac{2}{7} + \eps)$-prevalent but not $\frac{2}{7}$-prevalent when $p=7$. See \cite{GR05}, \cite{G05} for the results pertaining to Schur triples. We showed in Lemma \ref{l:phi-prev} that $\Phi$ is $0.45$-prevalent for odd $p$. On the other hand, if $p=3$, then $\Phi$ is not $\frac{1}{3}$-prevalent since it is not translation-invariant. 
\end{remark}

We will only prove the reverse implication of the below Proposition. The forward direction is essentially a union of the arguments in the second and third cases of the proof of Theorem \ref{t:main-alon} and so we omit the details.

\begin{prop}\label{p:main}
A $\frac{1}{2}$-prevalent system of linear forms $\Psi$ is geometrically common if and only if it is weakly Alon.
\end{prop}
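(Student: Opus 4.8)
The plan is to prove the reverse implication: assuming $\Psi$ is $\frac12$-prevalent and weakly Alon, deduce geometric commonness. So fix $f:\F_p^n\to[0,1]$ with $\E f = 1/2$; we must show $T_\Psi(f)T_\Psi(1-f)\geq 2^{-2t}$. Write $A := T_\Psi(f)$ and $B := T_\Psi(1-f)$. The key identity is \eqref{e:l-free}: since $\E f = \E(1-f) = 1/2$, for every $l\geq 0$ we have
\[
T_{\Psi^{(l)}}(f) + T_{\Psi^{(l)}}(1-f) = 2^{-l}\bigl(A + B\bigr).
\]
Hmm — this is a problem: when $\alpha = 1/2$ exactly, the added free variables contribute only the common factor $2^{-l}$, so the weakly Alon hypothesis applied directly at density $1/2$ gives merely $A+B > 2^{1-t}-\eps$ for all $\eps$, i.e. $A+B\geq 2^{1-t}$, which is just ordinary commonness at density $1/2$ and does not by itself force the \emph{product} bound. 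So the real content must come from perturbing the density slightly away from $1/2$ and exploiting that the weakly Alon bound must survive uniformly.

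The main idea, then, is the perturbation trick mirroring the second and third cases of the proof of Theorem~\ref{t:main-alon}, run in reverse. Given $f$ with $\E f = 1/2$, one does \emph{not} change $f$ itself; instead one considers, for a parameter $\alpha$ near $1/2$, a function built from $f$ on a slightly larger ambient space whose mean is $\alpha$ and whose $T_\Psi$-value is controlled by $A$ and $B$ — concretely, take $f\oplus(\text{constant } c)$ on $\F_p^n\times\F_p^m$ with $c$ chosen so the overall mean is $\alpha$, using that $T_\Psi$ is multiplicative over a direct product into a factor depending on $f$ and a factor depending only on the constant (so $T_\Psi$ of the constant piece is just $c^t$ up to the trivial combinatorics of the solution count). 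This yields, for the system $\Psi^{(l)}$ evaluated on the perturbed function of mean $\alpha$,
\[
T_{\Psi^{(l)}}(\tilde f) + T_{\Psi^{(l)}}(1-\tilde f) \;=\; \alpha^l\,\bigl(\text{stuff}\cdot A\bigr) + (1-\alpha)^l\,\bigl(\text{stuff}\cdot B\bigr),
\]
and then one optimizes over the splitting $\alpha$, exactly as in the AM--GM step of Theorem~\ref{t:main-alon}: choosing $\alpha$ so that $\alpha^l A = (1-\alpha)^l B$ balances the two terms and makes their sum equal to $2\sqrt{AB}$ times $(\alpha(1-\alpha))^{l/2}$, which is $2\sqrt{AB}\cdot 4^{-l/2}(1-O(1/l))$ when $|\alpha-\tfrac12| = O(1/\sqrt l)$. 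If $AB < 2^{-2t}$, then $2\sqrt{AB} < 2^{1-t}$, and one can pick $\eps>0$ and then $l$ large so that this quantity, multiplied by $2^{-l}(1-O(1/l))$, drops strictly below $2^{-l}(2^{1-t}-\eps)$ — contradicting weakly Alon. The role of $\frac12$-prevalence is precisely to guarantee $A,B>0$ (so the balancing $\alpha$ actually exists and lies near $1/2$, with $|\alpha-\tfrac12|\to 0$ as $l\to\infty$ at the right rate) and to keep the argument uniform in $n$.

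I expect the main obstacle to be the bookkeeping that makes the perturbation genuinely produce a valid $[0,1]$-valued function on $\F_p^N$ with the exact mean $\alpha$ while keeping $T_{\Psi^{(l)}}$ expressible cleanly in terms of $A$ and $B$ — one wants an honest analogue of \eqref{e:l-free} for mean-$\alpha$ functions obtained from a mean-$\tfrac12$ function, not just the identity as stated. A clean way around this is to observe that weakly Alon is really a statement about the infimum $I(\alpha, l) := \inf\{T_{\Psi^{(l)}}(f)+T_{\Psi^{(l)}}(1-f) : \E f = \alpha\}$ and to use the decomposition $I(\alpha,l) \leq \alpha^l I_\Psi(\beta) + (1-\alpha)^l I_\Psi(\gamma)$ for appropriate $\beta,\gamma$ coming from restricting a near-extremal mean-$\alpha$ function to the fibers of a coordinate projection — a standard tensor-power / fiber argument. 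Pushing $\beta,\gamma\to 1/2$ and invoking $\frac12$-prevalence to control the resulting $T_\Psi$ values from below then closes the loop. The analytic estimates $(1-c/l)^{l/2}\to e^{-c/2}$ and the choice of constants are then routine, exactly as in Theorem~\ref{t:main-alon}.
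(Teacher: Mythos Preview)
Your high-level strategy matches the paper's exactly: argue by contrapositive, start with $f$ of mean $\tfrac12$ with $AB<2^{-2t}$ (where $A=T_\Psi(f)$, $B=T_\Psi(1-f)$), perturb to a function of mean $\alpha$ slightly off $\tfrac12$ chosen so that $\alpha^lA$ and $(1-\alpha)^lB$ balance, and then use \eqref{e:l-free} to land at $\approx 2^{-l}\cdot 2\sqrt{AB}<2^{-l}\cdot 2^{1-t}$, contradicting weakly Alon. You also correctly identify the role of $\tfrac12$-prevalence: it forces $A,B>0$ uniformly, so the balancing $\alpha$ exists and satisfies $|\alpha-\tfrac12|=O(1/l)$.

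Where your proposal wobbles is the perturbation mechanism. The tensor idea $\tilde f=f\otimes c$ on $\F_p^n\times\F_p^m$ does not work as written: while $T_\Psi(\tilde f)=c^tA$ factors, the complement $1-\tilde f(x,y)=1-c\,f(x)$ is \emph{not} a tensor product, so $T_\Psi(1-\tilde f)$ bears no simple relation to $B$. Your fallback ``fiber decomposition'' inequality $I(\alpha,l)\leq \alpha^lI_\Psi(\beta)+(1-\alpha)^lI_\Psi(\gamma)$ is too vague to assess; in any case, what you actually need is a single explicit function $\tilde f$ with $\E\tilde f=\alpha$, $T_\Psi(\tilde f)\approx A$, and $T_\Psi(1-\tilde f)\approx B$.

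The paper's construction is much more direct and stays on the same space $\F_p^n$. Assuming without loss of generality $B>A$, set $c=\log\sqrt{B/A}$ and take $\tilde f=f+g$, where $g$ is supported on $\{x:f(x)\le 9/10\}$ (which has density $\ge 4/9$ by Markov), constant there, and has $\E g=c/(2l)$; then $\tilde f$ is still $[0,1]$-valued with mean $\tfrac12+c/(2l)$. The key point you were missing is that multilinearity alone gives $T_\Psi(f+g)=T_\Psi(f)+O(\E g)=A+O(1/l)$ and likewise $T_\Psi(1-f-g)=B+O(1/l)$, because every cross term in the expansion contains at least one factor of $g$ and is bounded by $\E g$. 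Plugging into \eqref{e:l-free} and using $(1\pm c/l)^l\to e^{\pm c}$ then yields $2^{-l}(e^cA+e^{-c}B+O(1/l))=2^{-l}(2\sqrt{AB}+O(1/l))$, finishing exactly as you outlined.
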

\begin{proof}
Suppose $\Psi$ is not geometrically common and let $n$ be such that there exists $f:\F_p^n \to [0,1]$ with $\E f = 1/2$ and $T(f)T(1-f) < 2^{-2t}$. If $T(f)=T(1-f)$ then we have immediately that $T(f) + T(1-f) < 2^{-t+1}$, and indeed there is $\eps > 0$ such that $T(f) + T(1-f) \leq 2^{-t+1} - \eps$. Then by (\ref{e:l-free}), 
\[T_{\Psi^{(l)}}(f) + T_{\Psi^{(l)}}(1-f) = 2^{-l}\left(T(f) + T(1-f)\right) \leq 2^{-l}(2^{1-t}-\eps), \] for all $l\geq 1$. Thus $\Psi$ is not weakly Alon in this case. 

Assume by symmetry that $T(1-f)> T(f)$. Define $c := \log\sqrt{T(1-f)/T(f)}~>~0$. By the $\frac{1}{2}$-prevalence of $\Psi$, we have that $c$ is bounded above uniformly in $n, f$. Now let $l\geq \frac{45c}{4}$ be a large integer to be determined more precisely later. Let $S = \{x:f(x) \leq \frac{9}{10}\}$; since $\E f = 1/2$, we have by Markov's inequality that $|S| \geq \frac{4p^n}{9}$. Let $g:\F_p^n \to [0,1]$ take value $\frac{p^n}{|S|}\cdot \frac{c}{2l} \leq \frac{1}{10}$ on $S$ and be zero otherwise. Then we have that $\E_xg(x) = \frac{c}{2l}$ and that $f + g$ takes values in $[0,1]$.  We claim that $f+g$ exhibits that $\Psi$ is not weakly Alon.

Note that for functions $f_i : \F_p^n \to [0,1]$ and nonzero linear forms $(\psi_i)_{i=1}^t$ we have that $\E_{\vect x} f_1(\psi_1(\vect x))\cdots f_t(\psi_t(\vect x)) \leq \E_x f_i(x)$ for any $i$, and so by the multilinearity of the $T$ operator we have that $T(f+g) = T(f) + O(1/l)$. Similarly, $T(1-(f+g)) = T(1-f) + O(1/l)$. Thus, 
\begin{align*}
T_{\Psi^{(l)}}(f+g) + T_{\Psi^{(l)}}(1-(f+g)) &= \left( \frac{1}{2} + \frac{c}{2l} \right)^l T(f+g) + \left(\frac{1}{2} - \frac{c}{2l}\right)^l T(1-(f+g))\\ 
&= \frac{1}{2^l}\left((1+c/l)^lT(f) + (1-c/l)^lT(1-f) + O(1/l)\right) \\
&= \frac{1}{2^l}\left(e^cT(f) + e^{-c}T(1-f) + O(1/l)\right) \\ 
&= \frac{1}{2^l}\left(2\sqrt{T(f)T(1-f)} + O(1/l)\right).
\end{align*}
Now we have that $\sqrt{T(f)T(1-f)} < 1/2^{2t}$ and so there exists $\eps>0$ such that for $l$ sufficiently large we have $T_{\Psi^{(l)}}(f+g) + T_{\Psi^{(l)}}(1-(f+g)) < 2^{-l}(2^{1-t}-\eps)$. Thus we conclude that $\Psi$ is not weakly Alon.

The proof of the reverse implication is omitted. 
\end{proof}

\begin{remark}
It is clear that if $\Psi$ is not $\frac{1}{2}$-prevalent then it is not geometrically common. We have not investigated whether the (weak) Alon property implies $\frac{1}{2}$-prevalence.
\end{remark}

We conclude with a picture showing the relationships between the various notions discussed in this document.

\begin{figure}\label{f:notions}
\centering
\includegraphics[scale = 0.6]{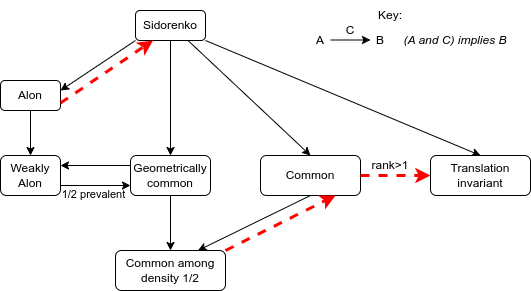}
\caption{Implications between various notions. Red arrows, from left to right, correspond to answers to Questions \ref{q:alon}, \ref{q:klm6}, \ref{q:klm5}, which show that the corresponding implications do not hold.}
\end{figure}

\bibliographystyle{alpha}
\bibliography{sid2}

\begin{thebibliography}{KLM21b}

\bibitem[BR80]{BR80}
S.~A. Burr and V.~Rosta.
\newblock On the {R}amsey multiplicities of graphs---problems and recent
  results.
\newblock {\em J. Graph Theory}, 4(4):347--361, 1980.

\bibitem[CCS07]{CCS07}
P.~Cameron, J.~Cilleruelo, and O.~Serra.
\newblock On monochromatic solutions of equations in groups.
\newblock {\em Rev. Mat. Iberoam.}, 23(1):385--395, 2007.

\bibitem[Erd62]{E62}
P.~Erd\H{o}s.
\newblock On the number of complete subgraphs contained in certain graphs.
\newblock {\em Magyar Tud. Akad. Mat. Kutat\'{o} Int. K\"{o}zl.}, 7:459--464,
  1962.

\bibitem[FPZ21]{FPZ21}
J.~Fox, H.T. Pham, and Y.~Zhao.
\newblock Common and {S}idorenko linear equations.
\newblock {\em Q. J. Math.}, 72(4):1223--1234, 2021.

\bibitem[Goo59]{G59}
A.~W. Goodman.
\newblock On sets of acquaintances and strangers at any party.
\newblock {\em Amer. Math. Monthly}, 66:778--783, 1959.

\bibitem[GR05]{GR05}
B.~Green and I.~Z. Ruzsa.
\newblock Sum-free sets in abelian groups.
\newblock {\em Israel J. Math.}, 147:157--188, 2005.

\bibitem[Gre05]{G05}
B.~Green.
\newblock A {S}zemer\'{e}di-type regularity lemma in abelian groups, with
  applications.
\newblock {\em Geom. Funct. Anal.}, 15(2):340--376, 2005.

\bibitem[JST96]{JST96}
C.~Jagger, P.~{S}\v{t}ov\'{\i}\v{c}ek, and A.~Thomason.
\newblock Multiplicities of subgraphs.
\newblock {\em Combinatorica}, 16(1):123--141, 1996.

\bibitem[KLM21a]{KLMCom}
N.~Kamcev, A.~Liebenau, and N.~Morrison.
\newblock On uncommon systems of equations.
\newblock {\em Israel J. Math., to appear}, 2021.
\newblock arxiv:2106.08986.

\bibitem[KLM21b]{KLMSid}
N.~Kamčev, A.~Liebenau, and N.~Morrison.
\newblock Towards a characterisation of {S}idorenko systems, 2021.
\newblock arxiv:2107.14413.

\bibitem[KLP22]{KLP22}
D.~Kr\'al', A.~Lamaison, and P.~P. Pach.
\newblock Common systems of two equations over the binary field.
\newblock 2022.
\newblock Discrete Math Days, conference proceedings. Extended abstracts
  available at \url{https://dmd2022.unican.es/978-84-19024-03-9.pdf}, pages
  169--173.

\bibitem[Sid89]{Sid89}
A.~F. Sidorenko.
\newblock Cycles in graphs and functional inequalities.
\newblock {\em Mat. Zametki}, 46(5):72--79, 104, 1989.

\bibitem[Sid93]{S93}
A.~Sidorenko.
\newblock A correlation inequality for bipartite graphs.
\newblock {\em Graphs Combin.}, 9(2):201--204, 1993.

\bibitem[SW17]{SW17}
A.~Saad and J.~Wolf.
\newblock Ramsey multiplicity of linear patterns in certain finite abelian
  groups.
\newblock {\em Q. J. Math.}, 68(1):125--140, 2017.

\bibitem[Tho89]{T89}
A.~Thomason.
\newblock A disproof of a conjecture of {E}rdős in {R}amsey theory.
\newblock {\em J. London Math. Soc. (2)}, 39(2):246--255, 1989.

\bibitem[Ver21a]{Vab}
L.~Versteegen.
\newblock Common and {S}idorenko equations in abelian groups, 2021.
\newblock arxiv:2109.04445.

\bibitem[Ver21b]{V4ap}
L.~Versteegen.
\newblock Linear configurations containing 4-term arithmetic progressions are
  uncommon, 2021.
\newblock arxiv:2106.06846.

\end{thebibliography}

\end{document}